\newtheorem{thm}{Theorem}
\newtheorem{lem}{Lemma}
\newtheorem{rem}{Remark}
\newcommand{\setD}{{\mathord{\mathbb D}}}
\newcommand{\setC}{{\mathord{\mathbb C}}}
\newcommand{\setR}{{\mathord{\mathbb R}}}
\newcommand{\RR}{\mathbb{R}}
\newcommand{\ol}{\overline}
\begin{document}
\title{A four--dimensional Neumann ovaloid}
\author[L. Karp]{Lavi Karp}
\author[E. Lundberg]{Erik Lundberg}

\address{%
Department of Mathematics\\ ORT Braude College\\
P.O. Box 78, 21982 Karmiel\\ Israel}

\email{karp@braude.ac.il}
\subjclass[2010]{Primary 31A35, 30C20; Secondary 35R35}

\address{%
Department of Mathematical Sciences\\ Florida Atlantic University, Boca Raton, 
FL 33431
\\ USA}

\email{elundber@fau.edu}

\subjclass[2010]{Primary 31A35, 30C20; Secondary 35R35}

\keywords
 {Quadrature domain, Schwarz function, Neumann oval, inverse potential 
 problem, elliptic integral}

\begin{abstract}
What is the shape of a uniformly massive object 
that generates a gravitational potential
equivalent to that of two equal point-masses?
If the weight of each point-mass is sufficiently small compared to the distance between
the points then the answer is a pair of balls
of equal radius, one centered at each of the two points,
but otherwise it is a certain domain of revolution
about the axis passing through the two points.
The existence and uniqueness of such a domain is known,
but an explicit parameterization is known only in the plane
where the region is referred to as a Neumann oval.
We construct a four-dimensional ``Neumann ovaloid'',
solving explicitly this inverse potential problem.

\end{abstract}

\date{} 
\maketitle

%
%

\section{Introduction}
A  domain $\Omega \subset \setR^n$
is called a \emph{quadrature domain} 
if it admits a formula for the integration of any harmonic and integrable  
function $u $ in $\Omega$,
\begin{equation}\label{eq:QFgen}
 \int_{\Omega} {u \, dV} = \langle T, u \rangle,
\end{equation}
where $T$ is a distribution (independent of $u$) such that 
$T \big| _{\setR^n\setminus\Omega}=0$.
In particular, when $T$ is a measure, then by applying (\ref{eq:QFgen}) to the 
Newtonian kernel it results that the external potential of the body $\Omega$ 
with density one is equal to the potential of the measure $T$.
If $T$ is a finitely-supported distribution of finite-order
(so the right-hand-side of (\ref{eq:QFgen}) is a finite sum of weighted point 
evaluations
of $u$ and its partial derivatives), then
$\Omega$ is referred to as a \emph{quadrature domain in the classical sense}.
There are many examples of quadrature domains in the classical sense  in the 
plane, where conformal mappings can be used to 
construct them (see e.g. \cite{Davis_74}). But  
in higher dimensional spaces, very few explicit examples are known 
\cite{EL2012, Karp92}, 
and the only known explicit example involving simple point evaluations
(with no partial derivatives appearing) is the ball. 
There are however existence results for such quadrature domains 
\cite{Gustafsson_90, Sakai_82}. 

\subsection{Neumann's oval}

One of the simplest non-trivial examples of a quadrature domain (in the 
classical sense) 
in the plane is the region whose boundary is described by the real algebraic 
curve (excluding the origin):
\begin{equation}
\label{eq:Neumannoval}
 (x^2+y^2)^2=\alpha^2(x^2+y^2)+4 \epsilon^2 x^2.
\end{equation} 

This curve is referred to as Neumann's oval 
(it is also known as the hypopede of Booth).
Denoting this region by $\Omega$, the quadrature formula  is a sum of two point 
evaluations:
\begin{equation*}
 \int_{\Omega} {u \, dA} =\pi B \cdot u (-\epsilon,0) + \pi B \cdot u 
(\epsilon,0),
\end{equation*}
where the coefficient $B$ is a function of $\alpha$ and $\epsilon$. This 
quadrature identity was discoved by C. Neumann in 1908 \cite{Neumann},
see \cite[Ch. 5, 14]{Davis_74} or \cite[Ch. 3]{Shapiro} for details.

\subsection{A four-dimensional Neumann ovaloid}

More generally, we refer to a domain $\Omega \subset \setR^n$ as a 
\emph{Neumann 
ovaloid}
if it admits a quadrature formula having two quadrature nodes with equal 
weights.
We will consider the case of a four-dimensional Neumann ovaloid $\Omega \subset 
\setR^4$
satisfying:
\begin{equation}
\label{eq:QF}
 \int_{\Omega} {u dV} = \pi^2 A\left\{ u(-\epsilon,0,0,0) +  u 
 (\epsilon,0,0,0)\right\},
\end{equation}
for some positive constant $A$.

As one should expect, $\Omega$ is axially-symmetric (see the next paragraph below).
However, the axially-symmetric domain in $\setR^4$ generated by the
 rotation of a two-dimensional Neumann oval is not a Neumann ovaloid,
and in fact, it is not even a quadrature domain in the classical sense.
Instead, 
as the first author showed in \cite{Karp92},
it has a quadrature formula supported on the whole segment joining the original 
quadrature points.

Concerning uniqueness, suppose 
that the distribution $T$ of (\ref{eq:QFgen}) is a non--negative 
measure with compact support in a hyperplane, as in the case of interest 
(\ref{eq:QF}). If  $\Omega$ is a bounded quadrature 
domain  for $T$, then $\Omega$ is symmetric with respect to the same hyperplane 
and the complement of the closure of $\Omega$ is connected. This result was 
proved by Sakai in the plane  \cite[\S 14]{Sakai_82}, and for the $n$-dimensional 
case see \cite[\S 4]{Shapiro}.
We may apply this to every hyperplane passing through the two quadrature points
in order to conclude that 
the Neumann ovaloid $\Omega$ is axially symmetric

Since the Neumann ovaloid $\Omega$ is axially symmetric
and has connected complement of its closure,
it must be generated by rotation of a simply connected planar domain $D_p$.
Invoking the Riemann mapping theorem, 
there is a conformal map $f$ from the unit disk $\setD$ to $D_p$,
and $f$ is unique once the value of $f(0)$ and $\arg f'(0)$ are prescribed.
In order to construct $\Omega$, it thus suffices to 
determine explicitly the conformal map $f$.


\begin{thm}
\label{thm:NeumannOvaloid}
Let $\Omega \subset \setR^4$ be the quadrature domain that satisfies the 
formula 
(\ref{eq:QF}),
and let $D_p$ denote the simply connected  domain that generates $\Omega$ 
by rotation.
Let $f$ be the conformal map from the unit disk $\setD$ to $D_p$ such that
$f(0)=0$ and $f'(0)>0$.
Then $f$
is given by:
\begin{equation}
\label{eq:conf}
f(\zeta) = \frac{C}{2 \pi i}\int\limits_{\{|w| = 1\}} \frac{1}{w - \zeta} 
\dfrac{(w^2-1) \sqrt{(w^2 + a^2) (1 + a^2 w^2)}}{(w^2-b^2)(1-b^2w^2)} dw, 
\end{equation}
for some real-valued positive constants $a,b$ and  $C$. 

\end{thm}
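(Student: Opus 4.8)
The plan is to reduce the four--dimensional problem to a planar one on the meridian cross--section and then to recognize the boundary of that cross--section through a Schwarz--function equation. Writing points of $\setR^4$ as $(x_1,x')$ with $\rho = |x'|$, an axially symmetric $u$ is harmonic in $\setR^4$ precisely when $u_{x_1x_1}+u_{\rho\rho}+\frac{2}{\rho}u_\rho=0$, and a direct computation shows that $u=v/\rho$ solves this equation if and only if $v$ is harmonic in the two variables $(x_1,\rho)$. Using $dV = 4\pi\,\rho^2\,dx_1\,d\rho$ on axially symmetric integrands and substituting $u=v/\rho$, the quadrature identity (\ref{eq:QF}) becomes a planar identity of the form $\const\cdot\int_{D_p}\rho\,v\,dA = \Lambda[v]$ valid for all planar harmonic $v$. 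The only subtlety in this first step is the behaviour at the two nodes: since $v=\rho u$ vanishes on the axis, the value $u(\pm\epsilon,0,0,0)$ is recovered as $\partial_\rho v(\pm\epsilon,0)$, so that $\Lambda$ is a first--order (derivative) functional at $z=\pm\epsilon$ in the complex coordinate $z = x_1+i\rho$.

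Next I would convert the weighted planar identity into a statement about the Schwarz function $S$ of $\partial D_p$, defined near the (real--analytic) boundary by $\bar z = S(z)$. Testing $\const\cdot\int_{D_p}\rho\,v\,dA=\Lambda[v]$ against analytic $v$ and reducing the area integrals to boundary integrals via the Cauchy--Pompeiu formula (replacing each occurrence of $\bar z$ by $S(z)$ on $\partial D_p$) shows that the identity holds for all $v$ exactly when a certain combination of $z$ and $S$ --- one that is \emph{quadratic} in $S$, because the weight $\rho = (z-\bar z)/2i$ contributes an extra power of $\bar z$ --- extends meromorphically into $D_p$ with poles only at $z=\pm\epsilon$. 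Solving this quadratic relation for $S$ produces $S(z)=z-\sqrt{z^2-R(z)}$ with $R$ rational, so that $S$ is algebraic with a square root. This is the structural source of the radical in (\ref{eq:conf}); it contrasts with the planar Neumann oval, where the weight is absent, $S$ is linear in the boundary data, and the conformal map is rational.

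I would then transport everything to the disk through $f$. By the symmetry of the configuration, $D_p$ is symmetric in both axes, so $f$ has real Taylor coefficients and is odd, and on $|\zeta|=1$ one has $\overline{f(\zeta)} = f(1/\zeta)$; combined with $\bar z = S(z)$ this yields the functional equation $f(1/\zeta)=S(f(\zeta))$. Pulling back the singularity data of $S$ through $f$ fixes the global analytic structure of the integrand in (\ref{eq:conf}): the poles of the meromorphic combination at $z=\pm\epsilon$ and their reflections give the poles at $w=\pm b$ and $w=\pm1/b$; the branch points of the radicand $z^2-R(z)$ give the branch points at $w=\pm ia$ and $w=\pm i/a$; and the two points where $\partial D_p$ meets the axis, where $\rho=0$ forces $S(z)=z$ and hence the radicand to vanish, give the zeros at $w=\pm1$. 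Reconstructing $f$ from these boundary data as the analytic projection (Cauchy integral) of the corresponding algebraic function over $\{|w|=1\}$ produces precisely the representation (\ref{eq:conf}).

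The remaining task is to pin down the constants and to verify that the formula genuinely solves the problem. The three parameters $a,b,C$ are determined by the normalizations $f(0)=0$ and $f'(0)>0$, together with the location $\pm\epsilon$ of the nodes and the weight $A$ in (\ref{eq:QF}); the reciprocal symmetry $w\mapsto 1/w$ built into the integrand is what makes the functional equation $f(1/\zeta)=S(f(\zeta))$ consistent. I expect the main obstacle to lie here, in the closing of the construction rather than in its derivation: one must check that the two sheets of the square root match across the branch cuts so that $S$ is a single--valued Schwarz function on $\partial D_p$, that the resulting $f$ is univalent with $\operatorname{Im} f>0$ on the appropriate half of $\setD$ (so that $\rho>0$ and $D_p$ is a bona fide meridian domain), and that evaluating the recovered quadrature functional indeed returns two equal point masses rather than additional spurious nodes. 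Verifying univalence and the positivity of the weight, and showing that the elliptic integrals arising from $\sqrt{(w^2+a^2)(1+a^2w^2)}$ can be arranged to meet all these constraints simultaneously, is the delicate heart of the argument.
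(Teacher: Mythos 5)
Your proposal follows essentially the same route as the paper: the substitution $v=\rho u$ reducing the axially symmetric problem to a planar one, the resulting requirement that $(S(z)-z)^2$ extend meromorphically into $D_p$ with double poles only at $\pm\epsilon$, the pullback via $f(1/\zeta)=S(f(\zeta))$ making $g(w)=\left(f(w)-f(1/w)\right)^2$ a rational function of degree eight, and the recovery of $f$ as the Cauchy transform of $\sqrt{g}$ over the unit circle. The only step you assert rather than derive is that the four remaining zeros of $g$ sit on the imaginary axis at $\pm ia$, $\pm i/a$; in the paper this is the content of Lemma \ref{lem:1}, obtained by combining the reciprocal symmetry $g(1/w)=g(w)$ and oddness of $f$ with the non-negativity of $g$ on the real axis, which rules out the alternatives $w_0$ real or $|w_0|=1$.
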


\begin{rem}
 There are existence results for point mass quadrature domains \cite{Gust81, 
Gustafsson_90, Sakai_82}; the important attribute of Theorem 
\ref{thm:NeumannOvaloid} is the explicit formula for the domain.  For each 
$\epsilon$ and $A$, the Neumann ovaloid (\ref{eq:QF}) is unique, this follows 
from the 
uniqueness of bounded  quadrature domains for non--negative measures with 
compact support in a hyperplane (see \cite[]{Sakai_82, Shapiro}). 
\end{rem}

In Section \ref{sec:prelim},
we review some essential background 
on axially symmetric quadrature domains in $\setR^4$
following \cite{EL2012, Karp92}. 
We prove 
Theorem \ref{thm:NeumannOvaloid}  in Section \ref{sec:main}.
The proof is based on the 
approach introduced in \cite{EL2012}
where A. Eremenko and the second author
used it to give a negative answer to the question 
of H.S. Shapiro \cite{Shapiro}
on the algebraicity of quadrature domains in $n>2$ dimensions.
In Section \ref{sec:constants} we 
describe how $a$ and $b$  of formula (\ref{eq:conf}) are related when $C=1$ is 
fixed, and we show that the quadrature formula for the ball is recovered from 
(\ref{eq:QF}) in the limit as $a$ and $b$ tend to zero. In Section 
\ref{sec:numerical}, we present some graphics based on numerical implementation 
of Theorem \ref{thm:NeumannOvaloid}.

\section{Preliminaries}\label{sec:prelim}

Here we review an algebraic technique that applies to axially 
symmetric 
quadrature domains in $\RR^4$. For further details see \cite{EL2012, Karp92}.
An equivalent definition of  the  quadrature domain  (\ref{eq:QFgen})  
is by the free boundary problem
\begin{equation}
\label{eq:SP}
\left\{
\begin{array}{ll}
\Delta u = T, 
& \text{in}\ \Omega \\
u =\frac{1}{2}{|x|^2}, \nabla u  = x & \text{on} \ \partial\Omega\\
\end{array}\right..
\end{equation}

The two notions 
are equivalent when the domain $\Omega $ is bounded and the distribution $T$ is
compactly supported in $\Omega$. 
A solution $u$ to the overdetermined system (\ref{eq:SP}) is called 
the \textit{Schwarz potential} of $\Omega$.

In the case of   the Neumann ovaloid (\ref{eq:QF}), the 
distribution $T$, as well as the domain  $\Omega$ has axial symmetry about the 
$x_1$--axis.  Hence the Schwarz potential $u$ can be represented by a function 
of two variables, namely 
\begin{equation*}
 u(x_1,x_2,x_3,x_4)=U(X,Y),
\end{equation*}
where 
$Y=\sqrt{x_1^2+x_2^2+x_3^2}$ and   $U(X,Y)=U(X,-Y)$ for negative $Y$.  Then the 
function $U$ satisfies the free boundary problem 
\begin{equation}
 \label{eq:free-b}
 \left\{
\begin{array}{ll}
\Delta U+2Y^{-1}\frac{\partial U}{\partial Y}  = T,
 & \text{in}\ D_p \\
U =\frac{1}{2}({X^2+Y^2}), \nabla U = (X,Y), &  \text{on} \ \partial D_p
\end{array}\right.,
\end{equation}
where $D_p\subset \setR^2$ is the domain whose rotation generates  $\Omega$.

The main idea is to set  $V(X,Y)=YU(X,Y)$, then $V$ satisfies the Cauchy 
problem 
\begin{equation}
 \label{eq:Cauchy}
\left\{
\begin{array}{ll}
\Delta V  = 0,
 & \text{near}\ \partial D_p \\
V =\frac{Y}{2}({X^2+Y^2}), &  \text{on} 
\ \partial D_p\\ 
\nabla V = (XY,\frac{1}{2}(X^2+3Y^2)), &  
\text{on} 
\ \partial D_p
\end{array}\right..
\end{equation}
This enables us to solve the free boundary problem (\ref{eq:free-b}) by means 
of the Schwarz function. Indeed, letting $z=X+iY$ and 
$V_z=\frac{1}{2}\left(\frac{\partial V}{\partial 
X}-i\frac{\partial V}{\partial Y}\right)$, then on the boundary $\partial D_p$, 
\begin{equation*}
 V_z(X,Y)=\frac{1}{2}XY-i\frac{1}{4}(X^2+3Y^2)=\frac{i}{4}(\bar z^2-2\bar z z),
\end{equation*} 
where $\bar z=X-iY$. 
Next, we  replace $\bar z$ by 
$S(z)$  on the boundary $\partial D_p$,
and we obtain
\begin{equation}
\label{eq:int}
 V(z)=2\text{Re}\left(\int V_z(z,S(z))dz\right)
\end{equation}
as the solution to the Cauchy problem (\ref{eq:Cauchy}) (see \cite[Lemma 
2.3]{Karp92}). 
The representation of the solution $V$ by (\ref{eq:int}) implies that
the singularities of $V_z(z,S(z))=\frac{i}{4}\left(S^2(z)-2 z 
S(z)\right)$ determine the distribution $T$ in (\ref{eq:SP}). 
Since we are interested only in the structure of the distribution $T$, we may 
perturb this expression by a holomorphic function.
Thus we 
conclude that the singularities of the  expression
\begin{equation}
\label{eq:9}
 \frac{i}{4}\left(S^2(z)-2 zS(z)\right)+\frac{z^2}{4} 
=\frac{i}{4}\left(S^2(z)-z\right)^2
\end{equation} 
govern the distribution $T$.  

Due to the axial symmetry, the support of the distribution $T$ is on the 
$x_1=X$--axis. So if the expression in (\ref{eq:9}) has non--vanishing 
residue at 
the points $(\pm \epsilon,0)$, then   in the integration (\ref{eq:int}) causes 
to a logarithmic term and consequently the support of $T$  is on a segment 
joining 
these two points. This implies that $\Omega$ will not be point masses 
quadrature domain.  For 
example, if $D_p$ is the Neumann oval (\ref{eq:Neumannoval}), then 
\begin{equation*} 
\begin{split}
&\frac{i}{4}\left(S^2(z)-z\right)^2 =  i\frac{(\alpha^2+2\epsilon^2)^2}{16}
\left(\frac{1}{(z-\epsilon)^2} 
+\frac{1}{(z+\epsilon)^2}\right)\\ + & i\frac{\alpha^4}{ 
16\epsilon}\left(\frac{1}{z-\epsilon}-\frac{1}{z+\epsilon}\right)+ h(z),
\end{split}
\end{equation*} 
where $h$ is holomorphic in $D_p$. 

However, if we require that 
\begin{equation}
\label{eq:Schwarz} 
\frac{1}{2}\left(S(z)-z\right)^2=\dfrac{A}{(z-\epsilon)^2}+\dfrac{A}{
(z+\epsilon)^2}+h(z),
\end{equation} 
then 
\begin{equation*}
\begin{split}
 V(X,Y)&=2\text{Re}\left(\int V_z(z,S(z))dz\right)\\ 
& =\frac{-AY}{(X-\epsilon)^2+Y^2}+
\frac{-AY}{(X+\epsilon)^2+Y^2}+H(X,Y), 
\end{split}
 \end{equation*}
with $H$ harmonic in $D_p$. 
Recalling that $U(X,Y)=Y^{-1}V(X,Y)$, we see that the singular part of the 
Schwarz potential (\ref{eq:SP}) comprises the expression
$\frac{A}{(x_1\pm\epsilon)^2+x_2^2+x_3^2+x_4^2}$, which is the fundamental 
solution of the 
Laplacian in $\setR^4$ at the points $(\pm\epsilon,0,0,0)$. Hence
\begin{equation*}
 T=\pi^2A\left(\delta_{(-\epsilon,0,0,0)}+\delta_{(+\epsilon,0,0,0)}\right)
\end{equation*}
is the distribution of the Schwarz potential (\ref{eq:SP}) and consequently 
the rotation of $D_p$ yields the Neumann Ovaloid (\ref{eq:QF}), here $\delta$ 
denote the Dirac measure.



\section{Construction of the Neumann Ovaloid}\label{sec:main}


As in the statement of Theorem \ref{thm:NeumannOvaloid}, take $f$ to be the 
conformal map from the 
unit disk $\setD$ into $D_p$ such that $f(0) = 0$ and $f'(0) > 0$ is real. With 
this normalization, the conformal map is unique. By symmetry of the domain $D_p$ 
under complex conjugation, and uniqueness of the conformal map, $f$ is real, 
i.e., $f^*(w) := \ol{f(\ol{w})}=f(w)$.  


Let $S$ be the Schwarz function of the boundary $\partial D_p$. Following \cite{EL2012}, 
we consider the pullback of (\ref{eq:Schwarz})
under the conformal map $f$.
Using the relation $S(f(w)) = f^*(1/w) = f(1/w),$
we have 
\begin{equation}
\label{eq:pullback}
\frac{1}{2}\left(f(w)-f\left(\frac{1}{w}\right)\right)^2 =
\dfrac{A}{(f(w)-\epsilon)^2}+\dfrac{A}{(f(w)+\epsilon)^2}+h(f(w)).
\end{equation} 

Setting
\begin{equation}
\label{eq2}
 g(w)=\left(f(w)-f\left(\frac{1}{w}\right)\right)^2,
\end{equation} 
then obviously $g(1/w) = g(w)$.
Using this fact,
we can use (\ref{eq:pullback}) to analytically continue $g(w)$
to the entire plane as a meromorphic (and in fact rational)
function. Namely, we prove the following:

\begin{lem}
\label{lem:1}
The function $g$ is a rational function of degree exactly $8$
and takes the form: 
\begin{equation}\label{eq:lemma}
g(w) = \frac{c(w^2-1)^2(w^2+a^2)(1+a^2w^2)}{(w^2-b^2)^2(1-b^2w^2)^2},
\end{equation}
with $a$, $b$, $c$ real constants and $a,b\neq\pm1$.
\end{lem}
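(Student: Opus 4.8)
The plan is to use the functional equation (\ref{eq:pullback}) to analytically continue $g$ across the unit circle to the whole Riemann sphere, and then to read off its poles and zeros from the symmetries of $f$.

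First I would observe that, since $f$ extends analytically across the analytic curve $\partial D_p$, the function $g$ in (\ref{eq2}) is holomorphic in an annulus $\{1-\delta<|w|<1+\delta\}$. The right-hand side of (\ref{eq:pullback}) then continues $\tfrac12 g$ inward: because $f$ is holomorphic and injective on $\setD$ and $h$ is holomorphic on $D_p$, the composition $h(f(w))$ is holomorphic there, so the only singularities inside $\setD$ occur where $f(w)=\pm\epsilon$. Writing $f(\pm b)=\pm\epsilon$ for the (necessarily real, by reality of $f$) preimages $\pm b\in(-1,1)$, one has $\tfrac12 g(w)\sim A/(f'(b)^2(w-b)^2)$ near $w=b$, so $g$ acquires a double pole with positive leading coefficient $2A/f'(b)^2$, and likewise at $-b$. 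Reflecting via $g(1/w)=g(w)$ produces double poles at $\pm 1/b$. Since there are no other singularities, and none on the circle itself, $g$ is meromorphic on the sphere, hence rational.

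Next I would extract the global shape from the symmetries. The symmetry $x_1\mapsto-x_1$ of the node configuration, together with uniqueness of the quadrature domain, makes $D_p$ invariant under $z\mapsto-z$, which forces $f$ to be odd; combined with reality this gives that $g$ is even, real, and invariant under $w\mapsto 1/w$. The four double poles at $\pm b,\pm 1/b$ account for pole-multiplicity $8$, so $g$ has degree exactly $8$ with denominator $(w^2-b^2)^2(1-b^2w^2)^2$. For the zeros, $f(w)-f(1/w)$ vanishes to first order at $w=\pm1$ (its value is $0$ while its derivative is $2f'(\pm1)\neq0$), so its square $g$ has double zeros there, contributing $(w^2-1)^2$. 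This leaves exactly four further zeros, which by the symmetry group form a single orbit $\{w_0,-w_0,1/w_0,-1/w_0\}$.

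The heart of the argument, and the step I expect to be the main obstacle, is to show this orbit lies on the imaginary axis, i.e.\ $w_0=ia$ with $a$ real, yielding the factor $(w^2+a^2)(1+a^2w^2)$ and completing (\ref{eq:lemma}). For this I would exploit two reality facts. On the circle $f(1/w)=\overline{f(w)}$, so $g(w)=-4(\text{Im}\,f(w))^2\le 0$ there with equality only at $w=\pm1$; in particular $g(i)<0$ since $f(i)\notin\RR$. At the centre, letting $w\to0$ in (\ref{eq2}) and using $f(1/w)=S(f(w))\to S(0)$ gives $g(0)=S(0)^2$, and $S(0)$ is real because $D_p$ is symmetric about $\RR$, so $g(0)\ge0$. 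As $g$ has no poles on the imaginary axis, were it to have no zero there it would keep the sign it has at $w=i$, forcing $g(0)<0$ and contradicting $g(0)\ge0$; hence $g$ vanishes at some $ia$, $a\neq0$. Its orbit $\{\pm ia,\pm i/a\}$ supplies precisely the four missing zeros, and the exact degree $8$ ensures there are no others, giving (\ref{eq:lemma}); here $a,b\neq\pm1$ because $g(\pm i)\neq0$ while $g(\pm1)=0$. The delicate points to check are the analyticity of the free boundary (so that $f$ truly continues across $\partial D_p$) and the sign bookkeeping that pins the last four zeros to the imaginary axis.
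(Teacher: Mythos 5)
Your argument is correct in outline and reaches the lemma by the same skeleton as the paper --- continuation of $g$ via (\ref{eq:pullback}) and $g(1/w)=g(w)$, double poles at $\pm b,\pm1/b$, double zeros at $\pm1$, and a four-point orbit of remaining zeros --- but the decisive step, placing that orbit on the imaginary axis, is done by a genuinely different route. The paper works on the \emph{real} axis: writing the unknown zero as $w_0=ae^{i\theta}$, it uses $g\ge 0$ on $\RR$ (from $f$ being real) to force $(x^2-w_0^2)(x^2-1/w_0^2)$ to be real, leaving $\theta=0$, $\theta=\pi/2$, or $|w_0|=1$; it then excludes $|w_0|=1$ by a monodromy argument ($\sqrt g$ would have branch points on the unit circle, so $f$ would not be single-valued) and $\theta=0$ by a sign computation. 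You work on the \emph{unit circle and imaginary axis} instead: $g=-4(\operatorname{Im}f)^2\le0$ on $|w|=1$ with $g(i)<0$ (since $f(i)$ is purely imaginary and nonzero), $g$ is real and pole-free on the imaginary axis, and $g(0)\ge0$, so an intermediate-value argument produces a zero $ia$ with $0<a<1$ and the orbit count finishes. This is arguably cleaner: a zero found strictly inside the disk automatically has modulus different from $1$, so you never need the separate branch-point argument.

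The one place your write-up is looser than the paper is the endpoint of the intermediate-value argument: from $g(i)<0$ and $g(0)\ge0$ you conclude a zero at some $ia$ with $a\ne0$, but if $g(0)=0$ the sign change could occur only at the endpoint, and the four remaining zeros would then sit at $0$ and $\infty$. This is not a vacuous worry: for the two-dimensional Neumann oval the analogous function does satisfy $g(0)=g(\infty)=0$, so strict positivity $g(0)>0$ (equivalently $S(0)\ne0$ for the continued Schwarz function) genuinely needs justification in dimension four. The paper asserts $g(0)\ne0$ and uses it exactly where you do (its stated reason, involving a pole of $S$ at the origin, really rules out a \emph{pole} of $g$ at $0$ rather than a zero, so it is terse there as well); note that even in the degenerate case the formula (\ref{eq:lemma}) survives with $a=0$, which the lemma's hypotheses do not exclude, so this is a gap in the argument rather than in the conclusion. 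Relatedly, you assert that the four remaining zeros ``form a single orbit,'' which a priori they need not (they could split as $\{0,\infty\}$ or double up at $\pm1$ or $\pm i$); it is your subsequent sign argument that actually rules these out, so the logic should be ordered accordingly.
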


\begin{proof}[Proof of Lemma \ref{lem:1}]
 
The points $\pm b\in \setD$ are the preimages of $\pm\epsilon\in D_p$ under $f$.
From (\ref{eq:pullback}) $g$ has a pole of order two at $\pm b$
and no other poles in $\setD$.
Since
$g\left( \frac{1}{w} \right)=g(w)$, $g$ has also poles of order two at 
$\pm 
\frac{1}{b}$. 
There are no other poles in $\setC \setminus \setD$, since otherwise 
we would have a contradiction to (\ref{eq:Schwarz}).
Thus, $g$ is a meromorphic function in the entire plane with exactly four poles 
of order two.  If $g(0)=0$, then the conformal map $f(w)\to \infty$ as 
$w\to\infty$, and that would imply that the Schwarz function has a pole at the 
origin, which  contradicts (\ref{eq:Schwarz}).
This implies $g$ is a rational function of degree exactly $8$.

Having determined the location of the poles, we have
\begin{equation*}
\label{eq:4}
 g(w)=\dfrac{P(w)}{(w^2-b^2)^2(1-b^2w^2)^2},
\end{equation*} 
where $P$ is a polynomial of degree  $8$. 
From (\ref{eq2}) we see that $g(\pm1)=0$ and the 
zeros have order two. 
Hence $g$ has additional four zeros. So suppose $w_0$ is a zero of $g$, then 
$w_0\neq 0$ since $g(0)\neq0$. Note that $f(w)=-f(-w)$, which follows from the 
symmetry of $D_p$ with respect to the 
real and imaginary axes, hence $-w_0$ is  a zero of $g$,
and by (\ref{eq2}) $g$ vanishes  $\pm 1/w_0$.
Thus
$$P(w)=c(w^2-1)^2(w^2-w_0^2)(w^2-1/w_0^2).$$

Now set  $w=x+i y$ and $w_0=a e^{i\theta}$ for some positive $a$, then 

\begin{equation*}
(x^2-w_0^2)(x^2-1/w_0^2)=x^4-x^2(a^2 e^{i2\theta}+a^{-2} e^{-i2\theta}) +1.
\end{equation*}
Since $f(w)=f^*(w)=\overline{f(\bar w)}$, $g$ is non--negative on the real 
axis. 
Hence the above expression is real, and therefore
\begin{equation*}
a^2 e^{i2\theta}+a^{-2} e^{-i2\theta}=\overline{a^2 
e^{i2\theta}+a^{-2} e^{-i2\theta}}=a^2 e^{-i2\theta}+a^{-2} 
e^{i2\theta},
\end{equation*}
which results in the identity
\begin{equation*}
a^2\sin 2\theta=a^{-2}\sin 2\theta.
\end{equation*}
Thus either $a=1$ or $\theta =0$ or $\theta=\frac{\pi}{2}$. Note that  $a=1$ is 
impossible since  then $\sqrt{g}$ would have branch points on the unit circle 
and consequently  $f$ is not single valued.  If $\theta=0$, then 
\begin{align*}
(x^2-w_0^2)(x^2-1/w_0)^2 &= x^4-x^2(a^2 +a^{-2}) +1 \\
&=\left(x^2-\frac{a^2+a^{-2}}{2}\right)^2+1- \left(\frac{a^2+a^{-2}}{2}\right)^2
\end{align*}
is negative for some $x$ when $a\neq\pm 1$. Hence 
$\theta=\frac{\pi}{2}$ and then 
\begin{equation*}
(x^2-w_0^2)(x^2-1/w_0)^2=x^4+x^2(a^2 +a^{-2}) 
+1\geq x^4+2x^2+1=(x^2+1)^2>0.
\end{equation*}
Thus $w_0=ia$, and we have
\begin{equation*}
P(w)=c(w^2-1)^2(w^2+a^2)(1+a^2w^2)
\end{equation*}
and we have obtained (\ref{eq:lemma}).
\end{proof}

Applying the Lemma \ref{lem:1}, 
and taking the square root in equation (\ref{eq2}) we have:
\begin{equation*}
f(w)-f\left(\frac{1}{w}\right) = \sqrt{g(w)} :=
\frac{C(w^2-1)\sqrt{(w^2+a^2)(1+a^2w^2)}}{(w^2-b^2)(1-b^2w^2)},
\end{equation*} 
where $C=\sqrt{c}$.
Multiplying by $({2 \pi i (w-\zeta)})^{-1}$ and integrating with respect to 
$dw$ 
along the contour $\{|w|=1\}$, we obtain
$$\frac{1}{2 \pi i} \int\limits_{\{|w|=1\}} \frac{f(w) - f(1/w)}{ w-\zeta} dw =
\frac{1}{2 \pi i}\int\limits_{\{|w|=1\}} \frac{\sqrt{g(w)}}{ w-\zeta} dw.
$$
The term $\frac{f(1/w)}{ w-\zeta}$
integrates to zero since it is analytic in $\setC \setminus \setD$
and $$ \frac{f(1/w)}{ w-\zeta} = O(1/|w|^2), \quad \text{as } w \rightarrow 
\infty.$$
Since $f(w)$ is analytic in $\setD$, 
we may apply the Cauchy integral formula:
$$f(\zeta) = \frac{1}{2 \pi i} \int\limits_{\{|w|=1\}} \frac{f(w)}{ w-\zeta} dw 
=\frac{1}{2 \pi i}\int\limits_{\{|w|=1\}} \frac{\sqrt{g(w)}}{ w-\zeta} dw.
$$
Thus we have:
\begin{equation}
f(\zeta) = \frac{C}{2 \pi i}\int\limits_{\{|w| = 1\}} \frac{1}{w - 
\zeta} 
\dfrac{(w^2-1)
 \sqrt{(w^2 + a^2) (1 + a^2 w^2)}}{(w^2-b^2)(1-b^2w^2)} dw,
\end{equation}
and this completes the proof of Theorem \ref{thm:NeumannOvaloid}.


\section{The relation between $a$ and $b$}\label{sec:constants}

Notice that the quadrature domain depends on only two parameters, $\epsilon$ 
and 
$A$,
while the conformal map $f$ 
appears to depend on three real parameters, $a \geq 0$, $b \geq 
0$, and $C > 0$.
However, fixing $C>0$, the value of $a$ is determined by the value of $b$.
In this section, we fix $C=1$,
which gives that $f$ is the identity map when $a=b=0$.
We will show how parameters $a$ and $b$ are related
and also show that as $a$ and $b$ tend to zero,
we recover the quadrature formula for the unit ball.

Notice that equation (\ref{eq:Schwarz}) 
provides a constraint relating $a$ and $b$.
Namely, we observe that the residue of $(S(z)-z)^2$ at $z=\pm\epsilon$ vanishes,
which is equivalent to ${{\rm Res}(gf':\pm b)=0}$.

Since the pole is of order two, we have:
\begin{equation}
\label{eq:res:1}
 {\rm 
 Res}(gf':b)=\dfrac{d}{d\zeta}\left((\zeta-b)^2g(\zeta)f'(\zeta)\right)\big|_{
\zeta=b}=0.
\end{equation}
Setting 
\begin{equation*}
H(\zeta,a,b)=(\zeta-b)^2g(\zeta)=
\frac{(w^2-1)^2(w^2+a^2)(1+w^2a^2)}{(w+b)^2(1-w^2b^2)^2},
\end{equation*}
then by   the formula of $ f$ (\ref{eq:conf}), 
\begin{equation}
\label{eq:res:2}
\begin{split}
 {\rm 
 	Res}(gf':b) &=\frac{1}{2\pi 
 	i}\int\limits_{\{|w|=1\}}\left[\dfrac{\frac{\partial 
H}{\partial 
\zeta}(b,a,b)}{(w-b)^2}+2\dfrac{ H(b,a,b)}{(w-b)^3}\right]\sqrt{g(w)} dw \\ &
=:F(a,b).
\end{split}
\end{equation}
Thus we conclude that the rotation of the domain $D_p$ generates the Neumann 
ovaloid 
(\ref{eq:QF}) if and only if $F(a,b)=0$ for some positive $a$ and $b$. 
We shall use the implicit function theorem in order to justify that 
identity (\ref{eq:res:2}) determines 
the parameter $a$ as a function of $b$.

To this end, the expressions $\frac{\partial H}{\partial 
\zeta}(b,a,b)$ and $ H(b,a,b)$ can be computed using symbolic computation 
software: 

\begin{equation}
\label{eq:com:1}
\begin{split}
&\frac{\partial H}{\partial \zeta}(\zeta,a,b)\Big|_{\zeta=b}=\\
&\frac{a^2+b^2(-4a^4+4a^2-1)+4b^4(a^4-a^2+1)+b^6(a^4+4a^2+4)+3b^8a^2}
{4{b}^{11}+8{b}^{9}-8{b}^{5}-4{b}^{3}},
\end{split}
\end{equation}


\begin{equation}
\label{eq:com:2}
H(b,a,b)=
\frac{a^2+{b}^{2}({a}^{4}+1)+b^4a^2}{4{b}^{6}+8{b}^{4}+4{b}^{2}}.
\end{equation}

It follows from the above computations the functional $F(a,b)$ is 
not continuous when $b=0$. Hence, we set  $a=\sqrt{\delta}$ and
\begin{equation*}
G(\delta,b)=b^3F(\sqrt{\delta},b).
\end{equation*}
Then obviously $F(a,b)=0$ for $(a,b)\in\setR^2_+$  if and only if 
$G(\delta,b)=0$.  We shall now compute the Taylor expansion of $G$ near the 
origin. From (\ref{eq:com:1}) and (\ref{eq:com:2}) we see  that
\begin{equation*}
G(\delta,0)=\frac{1}{2\pi 
i}\int\limits_{\{|w|=1\}}\left(-\frac{\delta}{4}\right)\frac{(w^2-1)
\sqrt{(w^2+\delta)(1+\delta w^2)}}{w^4}dw.
\end{equation*}
 Hence $G(0,0)=0$ and
 \begin{equation*}
 \begin{split}
 & \frac{\partial G}{\partial\delta}(0,0)=\lim_{\delta\to 
 0}\frac{1}{\delta}G(\delta,0) \\= &
 \lim_{\delta\to 
 0}\frac{1}{2\pi i}\int\limits_{\{|w|=1\}}\left(-\frac{1}{4}\right)\frac{(w^2-1)
 \sqrt{(w^2+\delta)(1+\delta w^2)}}{w^4}dw\\  
 = 
 &\frac{1}{2\pi  
i}\int\limits_{\{|w|=1\}}\left(-\frac{1}{4}\right)\frac{(w^2-1)w}{w^4}dw=-\frac{
1}{4}.
 \end{split}
 \end{equation*}
Since both  $b^3\frac{\partial H}{\partial \zeta}(b,0,b)$ 
and  $b^3H(b,0,b)$ are of order $O(b^2)$ for $b$ near zero, 
$\frac{\partial 
G}{\partial b}(0,0)=0$ and
\begin{equation*}
\begin{split}
\frac{\partial G}{\partial b}(0,b) &=\frac{1}{2\pi 
i}\int\limits_{\{|w|=1\}}\left[
\left(\frac{\frac{\partial^2}{\partial 
b\partial\zeta}\left(b^3H(b,0,b)\right)}{(w-b)^2}+2\frac{\frac{\partial}{
\partial
 b}\left(b^3H(b,0,b)\right)}{(w-b)^3}\right)\right.\\ 
&\left. \times\frac{(w^2-1)w}{(w^2-b^2)(1-b^2w^2)}\right]dw +O(b^2). 
  \end{split}
\end{equation*}
From (\ref{eq:com:1}) and (\ref{eq:com:2}) we that $\frac{\partial^2}{\partial 
b\partial\zeta}\left(b^3H(b,0,b)\right)=\frac{b}{2}+O(b^2)$ and  \\
$\frac{\partial}{\partial 
b}\left(b^3H(b,0,b)\right)=O(b^2)$, hence
\begin{equation*}
\frac{\partial^2 G}{\partial b^2}(0,0)=\lim_{b\to 0}\frac{1}{b}\frac{\partial 
G}{\partial b}(0,b)=\frac{1}{2\pi 
i}\int_{\{|w|=1\}}\left(\frac{1}{2}\right)\frac{(w^2-1)w
}{w^4}dw=\frac{1}{2}.
\end{equation*}
In a similar manner we have computed $\frac{\partial^2 G}{\partial b\partial 
\delta}(0,0)=\frac{\partial^2 G}{\partial \delta^2}(0,0)=0$ (see Appendix  
\ref{appendix}), hence
\begin{equation}
\label{eq:com:3}
G(\delta,b)=\frac{1}{4}\left(-\delta+b^2\right)+O(\delta^3)+O(b^3).
\end{equation}
Thus by the implicit function theorem there is a function $\delta(b)$ such that 
$G(\delta(b),b)=0 $ for $b$ near zero and positive. Moreover, we see from 
(\ref{eq:com:3})  that $\delta=\delta(b)=b^2+O(b^3)$. Hence the 
functional  $F(a,b)$, which is defined by (\ref{eq:res:2}), vanishes  
near the origin on a curve, in which $a\approx b $, or more precisely 
\begin{equation}
\label{eq:asym}
a^2=b^2+O(b^3).
\end{equation}

The coefficient $A$ of the quadrature identity (\ref{eq:QF}) can be computed as 
follows. When 
$F(a,b)=0$, then (\ref{eq:Schwarz}) holds and therefore by (\ref{eq:pullback})
\begin{equation*}
\begin{split}
A &=\frac{1}{4\pi 
i}\int\limits_{\{|z-\epsilon|=\rho\}}\left(S(z)-z\right)^2(z-\epsilon) dz\\ &=
\frac{1}{4\pi 
	i}\int\limits_{\{|w-b|=\tilde{\rho}\}}g(w)\left(f(w)-f(b)\right)f'(w) 
dw.
\end{split}
\end{equation*}
Writing $f(w)-f(b)=(w-b)\left(f'(b)+\varPhi(b)\right)$, where $\varPhi(b)=0$, 
then by Lemma \ref{lem:1},
\begin{equation*}
\begin{split}
 & A =A(b)= \\ & \frac{1}{4\pi 
	i}\int\limits_{\{|w-b|=\tilde{\rho}\}}\frac{(w^2-1)^2{(w^2+a^2)
(1+w^2a^2)}\left(f'(b)+\varPhi(b)\right)f'(w)}{(w+b)^2(1-w^2b^2)^2}
\frac{dw}{(w-b)}
\\ &
=\frac{1}{2}\frac{(b^2-1)^2{(b^2+a^2)(1+b^2a^2)}}{(2b)^2(1-b^4)^2}
\left(f'(b)\right)^2.
\end{split}
\end{equation*} 
The estimate  (\ref{eq:asym})  enables us to examine the asymptotic behavior as 
$ b$ 
goes to 
zero. We see that 
\begin{equation*}
\lim_{b\to 0}A(b)=\frac{1}{4}f'(0))^2,
\end{equation*}
and since when $b\to 0$, the conformal mapping $f$ tends to the identity, we 
conclude that  \begin{math}
\lim_{b\to 0}A(b)= A(0)=\frac{1}{4}
\end{math}. 
On the other hand, letting $\epsilon$ tend to zero in (\ref{eq:QF}), 
then the  right hand side goes to $2\pi^2 
A(0)u(0,0,0,0)=\frac{\pi^2}{2}u(0,0,0,0)$, and the domain $\Omega$ tends to the 
unit ball. Hence the limit coincides with the mean value property of harmonic 
functions.

\section{Numerics}\label{sec:numerical}

Numerical implementation of Theorem \ref{thm:NeumannOvaloid} requires 
determining 
appropriate choices of parameters.
The parameter $C$ simply scales the domain,
so that there is only a one-parameter family of different shapes.
While varying the choice of $b$,
we choose $a$ to satisfy the relation (\ref{eq:res:1}).
This is done numerically using Matlab.
We then use the condition $f(b) = \epsilon$ (where $C$ appears as a scalar)
in order to choose $C$ so that $\epsilon = 1$.
This leads to a one-parameter family of different shapes having the same 
``foci'' $\pm \epsilon = \pm 1$.
We note that this family can also be interpreted as a four-dimensional 
Hele-Shaw flow with two point sources (located at the foci).

We used Matlab to perform numerical integration of the Cauchy transform 
appearing in (\ref{eq:conf}).
This requires some care in checking that the branch of the square root is 
defined appropriately.
The radius of the contour of integration should also be increased slightly 
(without crossing any singularities
of the integrand);
this is a convenient way to avoid numerically integrating through the simple 
pole presented
by the fact that $\zeta$ is on the unit circle.

Having carried out these steps, we display some images of the profile curves 
for 
the 
resulting confocal Neumann ovaloids in Figure \ref{fig:Neumann}.

\begin{figure}[h!]
\center
    \includegraphics[scale=.38]{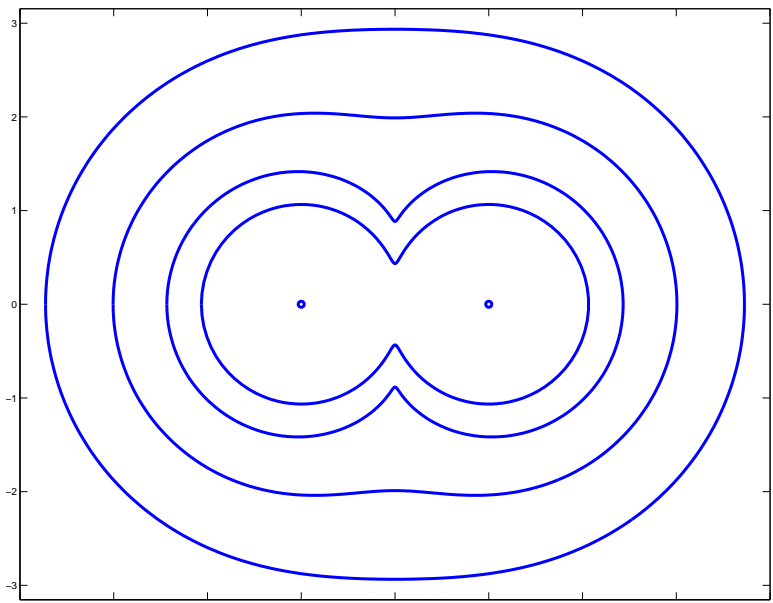}
    \caption{Profiles of some confocal ovaloids plotted using Matlab.
    Each curve is the image of the unit circle under
    a conformal mapping $f$ obtained as a numerical Cauchy transform.}
    \label{fig:Neumann}
\end{figure}

\section{Appendix}
\label{appendix}

Here we shall provide further details for the
computations of the second order derivatives  $\frac{\partial^2 G}{\partial 
b\partial\delta}G(0,0)$ and
$\frac{\partial^2 G}{\partial^2\delta}G(0,0)$. 
From the expressions 
(\ref{eq:com:1}) and (\ref{eq:com:2}) we see that
\begin{equation}
\label{eq:app:2}
\begin{split}
\frac{\partial G}{\partial \delta}(\delta,b) &=
\frac{1}{2\pi i}\int\limits_{\{|w|=1\}}\left[\left(\frac{\delta 
+b^2(-4\delta^2+4\delta 
-1)}{-4(w-b)^2}+2\frac{b\delta}{4(w-b)^3}\right)\right. \\ & \times
\left.\frac{(w^2-1)(1+2\delta 
w^2+w^4)}{(w^2-b^2)(1-w^2b^2)2\sqrt{(w^2+\delta)(1+w^2\delta)}}\right]dw\\ & +
\frac{1}{2\pi i}\int\limits_{\{|w|=1\}}\left[\left(\frac{1 
+b^2(-8\delta+4)}{-4(w-b)^2}+2\frac{b}{4(w-b)^3}\right)\right. \\ & \times
\left.\frac{(w^2-1)\sqrt{(w^2+\delta)(1+w^2\delta)}}
{(w^2-b^2)(1-w^2b^2)}\right]dw 
+O(b^2)
\end{split}
\end{equation}
Letting $\delta=0$ and recalling that $\frac{\partial G}{\partial 
	\delta}(0,0)=-\frac{1}{4}$, we see that
\begin{equation*}
\begin{split}
&\frac{\partial G}{\partial \delta}(0,b)-\frac{\partial G}{\partial 
\delta}(0,0)\\ = &
\frac{1}{2\pi 
i}\int\limits_{\{|w|=1\}}\left[\left(\frac{-1}{4(w-b)^2}+\frac{2b}{4(w-b)^3}
\right)
\frac{(w^2-1)w}{(w^2-b^2)(1-w^2b^2)}\right]dw 
\\ & +\frac{1}{4}+O(b^2)\\ = &
\frac{1}{2\pi i}\int\limits_{\{|w|=1\}}\left[\left(\frac{2b}{4(w-b)^3}\right)
\frac{(w^2-1)w}{(w^2-b^2)(1-w^2b^2)}\right]dw \\ +&
\frac{1}{2\pi 
i}\int\limits_{\{|w|=1\}}\frac{1}{4}\left[(w^2-1)w\left(\frac{1}{w^4}-
\frac{1}{(w-b)^2(w^2-b^2)(1-w^2b^2)}\right)\right]dw \\ & +
O(b^2).
\end{split}
\end{equation*}
In the last equality  we used the fact that 
\begin{equation}
\label{eq:app:1}
\frac{1}{2\pi i}\int\limits_{\{|w|=1\}}\frac{(w^2-1)w}{w^4}dw=1.
\end{equation}
By Taylor expansion, 
\begin{equation*}
\frac{1}{w^4}-
\frac{1}{(w-b)^2(w^2-b^2)(1-w^2b^2)}=\frac{1}{w^4}\left(-\frac{2b}{w}
+O(b^2)\right),
\end{equation*}
and  hence
\begin{equation*}
\begin{split}
&\frac{\partial^2 G}{\partial b\partial\delta}(0,0)=\lim_{b\to 
0}\frac{1}{b}\left(\frac{\partial 
G}{\partial \delta}(0,b)-\frac{\partial G}{\partial 
	\delta}(0,0)\right)\\ &= 
\frac{1}{2\pi 
i}\int\limits_{\{|w|=1\}}\left[\left(\frac{2(w^2-1)w}{4w^5}\right)\right]dw
-\frac{1}{2\pi 
i}\int\limits_{\{|w|=1\}}\left[\left(\frac{2(w^2-1)w}{4w^5}\right)
\right]dw\\ & =0.
\end{split}
\end{equation*}
We turn now to the computation of $\frac{\partial^2 G}{\partial 
\delta^2}(0,0)$. 
Using 
(\ref{eq:app:2}) 
and 
(\ref{eq:app:1}), we have that 
\begin{equation*}
\begin{split}
&\frac{\partial G}{\partial \delta}(\delta,0)- \frac{\partial G}{\partial 
\delta}(0,0)\\ = &
\frac{1}{2\pi i}\int\limits_{\{|w|=1\}}-\frac{1}{8}
\left[\frac{\delta(w^2-1)(1+2\delta 
	w^2+w^4)}{w^4\sqrt{(w^2+\delta)(1+w^2\delta)}}\right]dw\\ + &
\frac{1}{2\pi i}\int\limits_{\{|w|=1\}}-\frac{1}{4}
\left[\frac{(w^2-1)\sqrt{(w^2+\delta)(1+w^2\delta)}}
{w^4}\right]dw +\frac{1}{4}\\ = &
\frac{1}{2\pi i}\int\limits_{\{|w|=1\}}-\frac{1}{8}
\left[\frac{\delta(w^2-1)(1+2\delta 
	w^2+w^4)}{w^4\sqrt{(w^2+\delta)(1+w^2\delta)}}\right]dw\\ + &
\frac{1}{2\pi i}\int\limits_{\{|w|=1\}}\frac{1}{4}
\left[\frac{(w^2-1)w}{w^4}\left(1-\sqrt{1+\delta\left(\frac{1}{w^2}+w^2\right)
	+\delta^2}\right)\right]dw.
\end{split}
\end{equation*}
Taylor expansion 
\begin{equation*}
\sqrt{1+\delta\left(\frac{1}{w^2}+w^2\right)
	+\delta^2}=1+\frac{\delta}{2}\left(\frac{1}{w^2}+w^2\right) + 
O(\delta^2)
\end{equation*}
yields that
\begin{equation*}
\begin{split}
 &\frac{\partial^2 G}{\partial 
\delta^2}G(0,0)=\lim_{\delta\to0}\frac{1}{\delta}\left(\frac{\partial 
G}{\partial 
\delta}G(\delta,0)-\frac{\partial G}{\partial \delta}G(0,0)\right)\\  = &
\frac{1}{2\pi i}\int\limits_{\{|w|=1\}}-\frac{1}{8}
\left[\frac{(w^2-1)(1+w^4)}{w^5}\right]dw\\ + &
\frac{1}{2\pi i}\int\limits_{\{|w|=1\}}\frac{1}{8}
\left[\frac{(w^2-1)w}{w^4}\left(\frac{1}{w^2}+w^2\right)\right]dw\\ = &
\frac{1}{8}-\frac{1}{8}=0.
\end{split}
\end{equation*}



\end{document}